\newcommand{\C}{\mathbf{C}}
\newcommand{\Z}{\mathbf{Z}}
\newcommand{\R}{\mathbf{R}}
\newcommand{\N}{\mathbf{N}}
\renewcommand{\phi}{\varphi}
\DeclareMathOperator*{\conv}{conv}
\renewcommand{\P}{{\mathcal P}}
\newcommand{\F}{\mathcal{F}}
\newcommand{\T}{\mathcal{T}}
\DeclareMathOperator{\vol}{vol}
\theoremstyle{plain}
\newtheorem{theorem}{Theorem}
\newtheorem{corollary}[theorem]{Corollary}
\newtheorem{lemma}[theorem]{Lemma}
\theoremstyle{definition}
\newtheorem{example}[theorem]{Example}
\date{June 27, 2008}
\title{A bijective proof for a theorem of Ehrhart}
\author{Steven V Sam}
\begin{document}

\maketitle

\begin{abstract} We give a new proof for a theorem of Ehrhart
  regarding the quasi-polynomiality of the function that counts the
  number of integer points in the integral dilates of a rational
  polytope. The proof involves a geometric bijection,
  inclusion-exclusion, and recurrence relations, and we also prove
  Ehrhart reciprocity using these methods.
\end{abstract}

\section{Introduction.}

Enumerative combinatorics is a rich and vast area of
study. Particularly interesting in this subject are families of
objects parameterized by the positive integers $\Z_{>0}$ with an
associated counting function $f(t)$ that is polynomial; this last
statement means that there is some polynomial $p(t)$ such that $p(t) =
f(t)$ for all $t \in \Z_{>0}$.

It is a bit mysterious that polynomial sequences arise at all in
enumerative combinatorics. Even more so, these polynomials should {\it
  a priori} have no meaning when evaluated at negative
values. However, the surprising fact is that they oftentimes do; such
occurrences are usually called {\bf combinatorial reciprocity
  theorems}. To warm up, we will begin with two examples of
combinatorial reciprocity theorems related to finite graphs and
partially ordered sets (posets, for short). The main subject of this
paper will be a combinatorial reciprocity theorem related to counting
lattice points in polytopes. To motivate this topic, we will discuss
Pick's theorem shortly, which is a special case of the reciprocity
theorem in dimension 2.

For the first example, let $\Gamma$ be a finite undirected graph with
$n$ vertices, and let $V(\Gamma)$ denote its vertex set. For $t \in
\Z_{>0}$, a {\bf $t$-coloring} of $\Gamma$ is a function $c \colon
V(\Gamma) \to \{1,\dots,t\}$. A $t$-coloring is {\bf proper} if $c(v)
\ne c(v')$ whenever $v$ and $v'$ are adjacent vertices. The function
$\chi_\Gamma(t)$ which counts the number of proper $t$-colorings of
$\Gamma$ is a polynomial of degree $n$ called the {\bf chromatic
  polynomial} of $\Gamma$. Surprisingly, there is a nice combinatorial
interpretation for the number $(-1)^n \chi_\Gamma(-t)$ for $t \in
\Z_{>0}$. First, some more definitions. Given an orientation of the
edges of $\Gamma$, a {\bf directed cycle} is a sequence of vertices
$(v_0, \dots, v_r)$ such that there is an edge oriented from $v_{i-1}$
to $v_i$ for $i=1,\dots, r$ and $v_0 = v_r$. An orientation of
$\Gamma$ is {\bf acyclic} if it has no directed cycles. We say that a
$t$-coloring is {\bf compatible} with an orientation of $\Gamma$ if
for every edge oriented from vertex $v_1$ to $v_2$, $c(v_1) \le
c(v_2)$. Then $(-1)^n \chi_\Gamma(-t)$ is the number of pairs
$(\alpha, c)$ where $\alpha$ is an acyclic orientation of $\Gamma$
and $c$ is $t$-coloring compatible with $\alpha$. By convention, if
$\Gamma$ has no edges, there is exactly one orientation on the edges
of $\Gamma$. In particular, $(-1)^n\chi_\Gamma(-1)$ counts the number
of acyclic orientations of $\Gamma$.

For the next example, let $P$ be a finite poset with $n$ elements.
The function $\Omega_P(t)$ which counts the number of order-preserving
maps $\phi \colon P \to \{1,\dots,t\}$, i.e., maps with the property
that if $x \le y$ for $x,y \in P$, then $\phi(x) \le \phi(y)$, is a
polynomial called the {\bf order polynomial} of $P$. The combinatorial
reciprocity theorem in the example of the order polynomial
$\Omega_P(t)$ is much simpler: $(-1)^n \Omega_P(-t)$ is the number of
{\it strict} order-preserving maps $\phi \colon P \to \{1,\dots,t\}$,
i.e., maps with the property that if $x < y$ for $x,y \in P$, then
$\phi(x) < \phi(y)$.

These interpretations are indeed a bit unexpected, but in the author's
opinion, this is one of the more attractive features of mathematics.

In this paper we will discuss another instance of combinatorial
reciprocity that is related to a famous theorem proven by Georg Pick.
Pick led a productive mathematical life and worked in many different
fields ranging from functional analysis and linear algebra to complex
analysis and differential geometry. His most famous result now is
Theorem \ref{picktheorem}, commonly known as Pick's theorem. When
first published, Pick's theorem did not receive much attention. It
was, however, included in the famous book {\it Mathematical Snapshots}
\cite{steinhaus} which was first published in 1969, and it then
attracted much more attention. During World War II, Pick was sent to
the Theresienstadt concentration camp in 1942 and died there shortly
after that. More information on Pick's life can be found at
\url{http://www-history.mcs.st-and.ac.uk/Biographies/Pick.html}.

Before stating Pick's theorem, we need a bit of notaton. Let $\P$ be a
connected and simply connected polygon (not necessarily convex, but we
do assume our polygons are simple) in the plane whose vertices lie in
$\Z^2$. For the rest of this paper, elements of $\Z^2$, and, more
generally, $\Z^n$, will be referred to as integer points, integral
points, and sometimes lattice points. Let $A$ be the area of $\P$, let
$B$ be the number of integer points on the boundary of $\P$, and let
$I$ be the number of integer points in the interior of $\P$. Pick's
famous theorem \cite{pick} (also see \cite[Theorem 2.8]{ccd} for a
modern treatment) relates these quantities:
\begin{theorem}[Pick] \label{picktheorem} Let $\P$ be a connected and
  simply connected\footnote{These hypotheses can be weakened. In the
    general case, we may allow $\P$ to have multiple connected
    components as long as each one has integral vertices, and we may
    also allow $\P$ to have ``holes'', as long as the ``vertices'' of
    the holes are also integer points. We won't bother with stating
    this precisely, but leave it to the reader to find the correct
    definitions. In this case, the $-1$ in Pick's theorem is replaced
    by $-\chi(\P)$, where $\chi$ denotes the Euler characteristic of
    $\P$ as a topological space (e.g., computed using singular
    homology). Recall that a contractible space (e.g., a connected and
    simply connected polygon) has Euler characteristic 1.} polygon in
  the plane whose vertices lie in $\Z^2$. With the notation above,
  \begin{align} \label{pick} 
    A = I + \frac{B}{2} - 1. 
  \end{align}
\end{theorem}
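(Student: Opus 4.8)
The plan is to show that the ``defect''
\[
\delta(\P) := A(\P) - I(\P) - \tfrac{1}{2}B(\P) + 1
\]
vanishes for every lattice polygon of the kind allowed. I would do this by a two-step reduction: first cut $\P$ into lattice triangles so small that their only lattice points are their three vertices (call these \emph{primitive} triangles), verify the identity on a single primitive triangle, and then reassemble using that $\delta$ is additive under dissection. So the two ingredients are (i) additivity of $\delta$, and (ii) a dissection of $\P$ into primitive triangles.

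For additivity, the hands-on approach is: if $\P=\P_1\cup\P_2$ with $\P_1,\P_2$ lattice polygons having disjoint interiors and meeting along a common edge $e$ carrying $k\ge 2$ lattice points, then $A(\P)=A(\P_1)+A(\P_2)$, while $I(\P)=I(\P_1)+I(\P_2)+(k-2)$ (the $k-2$ relatively interior points of $e$ become interior to $\P$) and $B(\P)=B(\P_1)+B(\P_2)-2(k-1)$ (those $k-2$ points leave the boundary and the two endpoints of $e$ are double-counted); plugging in, all the $k$'s cancel and $\delta(\P)=\delta(\P_1)+\delta(\P_2)$. The subtle point is that a triangulation using interior lattice points cannot always be assembled one triangle at a time along single edges (its dual graph may contain cycles), so to get additivity over an \emph{arbitrary} dissection I would instead establish the identity $I(\P)+\tfrac12 B(\P)-1=\sum_{v}\omega_\P(v)$, where the sum is over lattice points $v\in\P$ and $\omega_\P(v)$ is the interior angle of $\P$ at $v$ divided by $2\pi$ (using that a simple $V$-gon has interior angles summing to $(V-2)\pi$, so that interior points contribute $1$, non-vertex boundary points $\tfrac12$, and the vertices contribute $\tfrac{V-2}{2}$). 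Since $\vol$ is additive and the angles of the pieces at any $v$ sum to $\omega_\P(v)$, this presentation makes $\delta(\P)=\sum_i\delta(T_i)$ transparent for any dissection $\P=\bigcup_i T_i$.

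For the reduction: every simple polygon with vertices in $\Z^2$ is triangulated by its own vertices (ear-clipping), and each resulting lattice triangle can be repeatedly subdivided — join an interior lattice point to the three vertices, or join an edge lattice point to the opposite vertex — until none remain, which terminates because each piece has strictly fewer lattice points than its parent. Thus $\P$ is dissected into primitive triangles $T_i$. For a primitive triangle $T$ with vertices $v_0,v_1,v_2$ we have $I(T)=0$ and $B(T)=3$, so $\delta(T)=0$ is equivalent to $\vol(T)=\tfrac12$, i.e. $|\det(v_1-v_0,\,v_2-v_0)|=1$. I would prove this by normalizing with $GL_2(\Z)$: primitivity forces $v_1-v_0$ to be a primitive vector, so it can be carried to $(1,0)$ and $v_2-v_0$ to some $(p,q)$; a shear fixing $(1,0)$ together with a reflection reduces to $0\le p<q$ with $q\ge 1$ (the degenerate $q\le 0$ cases being excluded), and a short case analysis using primitivity of the other two edges then forces $q=1$ (e.g., when $p\ge 1$ and $q\ge 2$ the point $(1,1)$ lies in the triangle), giving determinant $\pm 1$; alternatively Minkowski's convex body theorem applied to a suitable centrally symmetric parallelogram works. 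Combining, $\delta(\P)=\sum_i\delta(T_i)=0$, which is exactly \eqref{pick}.

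I expect the primitive-triangle area lemma ($\vol=\tfrac12$) to be the genuine mathematical content, and the bookkeeping behind full-generality additivity (equivalently, the angle identity) to be the other place where care is needed; the triangulation step and the final assembly are then routine.
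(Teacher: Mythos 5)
Your proof is correct, but note that the paper does not actually prove Theorem \ref{picktheorem}: it is quoted as classical motivation, with the proof deferred to \cite{pick} and \cite[Theorem 2.8]{ccd}, so there is no in-paper argument to compare against. What you have written is the standard elementary proof, and all the delicate points are handled properly: the normalized-angle identity $I+\tfrac12 B-1=\sum_v\omega_\P(v)$ is the right way to get additivity of $\delta$ over an arbitrary dissection (since area and local angles are both genuinely additive, even when the dual graph of the triangulation has cycles, which defeats the naive one-edge-at-a-time gluing); the termination argument for refining to primitive triangles works because each subtriangle omits at least one vertex of its parent; and your $GL_2(\Z)$ normalization of a primitive triangle is sound (after reducing to vertices $(0,0),(1,0),(p,q)$ with $0\le p<q$, the point $(1,1)$ does lie in the triangle when $p\ge1$ and $q\ge2$, and $(0,1)$ lies on an edge when $p=0$ and $q\ge 2$, so primitivity forces $q=1$ and area $\tfrac12$). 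It is worth observing that the paper's own machinery gives a genuinely different route to the same identity: once one knows $L_\P(t)=At^2+c_1t+1$ (Theorem \ref{ehrharttheorem} plus the leading- and constant-coefficient facts from the concluding remarks) and Ehrhart--Macdonald reciprocity (Theorem \ref{reciprocity}), one gets $B=L_\P(1)-L_\P(-1)=2c_1$ and hence $I+B=L_\P(1)=A+\tfrac{B}{2}+1$, which is \eqref{pick}; your approach is more elementary and self-contained, while the Ehrhart-theoretic derivation explains which parts of Pick's theorem survive in higher dimensions.
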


Now let $t$ be a positive integer; we consider dilates $t\P := \{tx
\mid x \in \P\}$. The area of $t\P$ is $At^2$ and the number of
integer points on the boundary of $\P$ is $Bt$. If we let $I(t)$
denote the number of integer points in the interior of $t\P$, then
\eqref{pick} becomes 
\begin{align} \label{dilatepick}
At^2 = I(t) + \frac{B}{2}t - 1.
\end{align}
We know that the number of integer points of $t\P$ is $I(t) + Bt$, so
by adding $\frac{B}{2}t + 1$ to both sides of \eqref{dilatepick}, we
find that the total number of integer points $L_\P(t)$ of $t\P$ is
\[
L_\P(t) = At^2 + \frac{B}{2}t + 1.
\]
The right-hand side is a quadratic polynomial in $t$. Let
$L_{\P^\circ}(t)$ denote the number of interior integer points of
$t\P$. The important observation is that
\[
L_{\P^\circ}(t) = At^2 - \frac{B}{2}t + 1,
\]
which leads to the functional equation
\begin{align} \label{pickequation}
L_{\P^\circ}(t) = L_\P(-t).
\end{align}
All of this can be illustrated by counting integer points in
Figure~\ref{pickfigure}.
\begin{figure}
  \begin{center}
    \includegraphics[width=.25\textwidth]{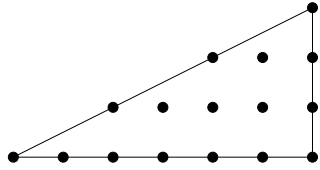}
  \end{center}
  \caption{An example of an integral polygon.}
  \label{pickfigure}
\end{figure}

This can, and will, be generalized to higher dimensions. However, a
bit should be said about how one might generalize Pick's theorem. For
example, can one compute the volume of a 3-dimensonal polyhedron by
counting its integer points? The answer is no, and comes in the form
of an example:

\begin{example} \label{reeveexample} Let $T_h$ be the tetrahedron
  whose vertices are the points $(0,0,0)$, $(1,0,0)$, $(0,1,0)$, and
  $(1,1,h)$ for $h \in \Z_{>0}$; its base is the triangle whose
  vertices are the first three points mentioned. The volume of $T_h$
  is $\frac{1}{3}$ times the area of the base times the height, which
  is $h$. This comes out to $\frac{h}{6}$. It is not hard to see that
  the only integer points inside of $T_h$ are the four points
  mentioned above: a general point $P \in T_h$ looks like
  \[
  P = a(0,0,0) + b(1,0,0) + c(0,1,0) + d(1,1,h)
  \]
  where $a,b,c,d \ge 0$ and $a + b + c + d=1$. Then if $P \in \Z^3$,
  we have to have $b,c,d \in \{0,1\}$. But if any of them is 1, then
  the other three coefficients must be 0, and if they are all 0, then
  $P$ is the origin. So we are left with the fact that $T_h$ {\it
    always} has 4 integer points but the volume grows arbitrarily
  large as $h$ tends to infinity. Thus, no higher-dimensional analogue
  of Pick's theorem can hold.

  It is worth mentioning that this example was first used by John
  Reeve in 1957 (see \cite{reeve}) to show that the idea of computing
  area from counting integer points does not generalize to 3
  dimensions. For the reader who comes back to this example, its
  Ehrhart polynomial is
  \[
  L_{T_h}(t) = \frac{h}{6}t^3 + t^2 + \left(2-\frac{h}{6}\right)t + 1,
  \]
  so $T_{13}$ has negative coefficients in its Ehrhart polynomial.
\end{example}

Nevertheless, there is a theorem (Theorem \ref{ehrharttheorem}), now
called Ehrhart's theorem, which is, in a vague sense, the correct way
to generalize Pick's theorem to higher dimensions. It was proven by
Eug\`ene Ehrhart, who was not a professional mathematician. He spent
most of his life teaching mathematics in high schools in France and
did his research on the side as a hobby. He proved his eponymous
theorem in 1962 \cite{ehrhartpolynomial}, and it wasn't until the age
of 60 that he obtained his Ph.D. with his thesis, {\it Sur un
  probl\`eme de g\'eom\'etrie diophantienne lin\'eaire} ({\it On a
  linear problem in Diophantine geometry}). Most of his papers concern
discrete geometry and Diophantine equations. For more information
about Ehrhart, the reader might see the website
\url{http://icps.u-strasbg.fr/~clauss/Ehrhart.html}.

In this paper, we are interested in the Ehrhart polynomial of an
integral polytope. In Section~\ref{quasipolysection}, we shall be
interested in the Ehrhart {\it quasi}-polynomial of a rational
polytope. Though we haven't defined these terms yet, what's to come
should be clear: we will construct a counting function associated to
an integral polytope, show that it agrees with a polynomial for
positive integers (in fact also for 0), and then derive a
combinatorial reciprocity theorem. While the theorems are originally
due to Ehrhart \cite{ehrhartpolynomial} and Macdonald
\cite{macdonald}, our proof is new. In particular, our proof of
Ehrhart--Macdonald reciprocity clears up some of the mystery (see
Figure~\ref{reciprocityfig}) of its statement. While the conceptual
idea of the proof is simple, verifying the details involves many
manipulations of summations, which can be a bit exhausting. To remedy
this, we have provided general ideas of how the proofs are to work
through examples before each proof.

Before proceeding, we should mention that the first two examples
presented in the introduction are special cases of Ehrhart's theorem,
because one can translate problems about counting proper colorings or
order preserving maps into counting integer points in some integral
polytope (or at least something approximately equal to an integral
polytope for which Ehrhart's theorem is true). For the connection
between chromatic polynomials (and more) and counting integer points,
the reader is encouraged to read \cite{insideout}, and for the
connection with order polynomials, the article \cite{posetpolytope} is
recommended.

\section{Statements of results.} \label{statements}

We will now give some definitions and explain the general setup. Given
points $p_1, \dots, p_n \in \R^n$, a {\bf convex combination} of $p_1,
\dots, p_n$ is a linear combination $a_1v_1 + \cdots + a_nv_n$ where
$a_i \ge 0$ for $i=1,\dots,n$ and $a_1 + \cdots + a_n = 1$. The {\bf
  convex hull} of a set $S \subseteq \R^n$ is the set of all convex
combinations of $S$:
\[
\conv(S) := \{ a_1x_1 + \cdots + a_rx_r \mid a_i \ge 0,\ a_1 + \cdots
+ a_r = 1,\ x_i \in S\}.
\]
Another equivalent definition is that the convex hull of $S$ is the
intersection of all convex sets containing $S$; we'll be more
interested in the first definition. An {\bf integral} (respectively,
{\bf rational}) {\bf polytope} $\P \subset \R^n$ is the convex hull of
finitely many integral (respectively, rational) points in $\R^n$. The
{\bf dimension} of $\P$ is the dimension of its affine span (or,
equivalently, one could translate some point in $\P$ to the origin and
compute the dimension of the vector subspace of $\R^n$ that its points
generate). If $v \in \P$ cannot be written as a convex combination of
any subset of points in $\P$ that does not include $v$, then $v$ is a
{\bf vertex} of $\P$. In particular, $\P$ is the convex hull of its
vertices, and there are only finitely many of them. If the dimension
of $\P$ is $d$, and $\P$ has $d+1$ vertices, we say that $\P$ is a
{\bf simplex}. Note that by linear independence, the representation of
a point inside of a simplex as a convex combination of its vertices is
necessarily unique. Let $\P^{\circ}$ denote the {\bf relative
  interior} of $\P$, i.e., the topological interior of $\P$ in its
affine span with the subspace topology. It is not hard to see that the
relative interior of a simplex with vertices $v_0, \dots, v_d$ is the
set of convex combinations $a_0v_0 + \cdots + a_dv_d$ where $a_i > 0$
for $i=0,\dots,d$.

Given a polytope $\P$, we define a scalar multiplication $t\P := \{ tx
\mid x \in \P \}$ for $t \in \R$, but we shall restrict our attention
to $t \in \Z$. Now define $\ell_\P \colon \Z \to \Z_{\ge 0}$ by
\[
\ell_\P(t) := \begin{cases} \#(t\P \cap \Z^n) & \text{if } t > 0,\\
  1 & \text{if } t=0,\\
  (-1)^{\dim \P}\#(t\P^\circ \cap \Z^n) & \text{if } t < 0. \end{cases}
\]
Here $\#$ denotes the cardinality of a set. This
definition\footnote{The reader may have noticed that for the purposes
  of counting integer points, it makes no difference if we consider
  $t\P^\circ \cap \Z^n$ or $-t\P^\circ \cap \Z^n$ when $t<0$, but it
  will turn out in the proof of Theorem~\ref{reciprocity} that
  $t\P^\circ \cap \Z^n$ is the ``correct'' definition. Furthermore,
  there should be no reason to separate the case $t=0$ because $0\P$
  is a single integer point at the origin. In this case it is
  irrelevant because polytopes are contractible, but for the case of
  polytopal complexes (which we can still count!), $t=0$ indeed
  becomes an exceptional case.} may seem strange, but now the goal of
this paper becomes easy to state:

\begin{theorem}[Ehrhart, Macdonald] If $\P \subset \R^n$ is an
  integral polytope of dimension $d$, then there exists a polynomial
  $L_\P(t)$ of degree $d$ such that $L_\P(t) = \ell_\P(t)$ for all $t
  \in \Z$.
\end{theorem}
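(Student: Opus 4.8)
The plan is to prove the statement in two stages: first establish it for simplices by an explicit bijective argument, and then bootstrap to arbitrary integral polytopes using inclusion--exclusion over a triangulation. The bijective core of the argument is the observation that any lattice point in a dilate $t\Delta$ of a $d$-simplex $\Delta$ with vertices $v_0,\dots,v_d$ can be written uniquely as $a_0 v_0 + \cdots + a_d v_d$ with $a_i \ge 0$ and $\sum a_i = t$; writing each $a_i$ as an integer part plus a fractional part, one sends such a point to a pair consisting of a ``small'' lattice point in the fundamental parallelepiped $\{b_0 v_0 + \cdots + b_d v_d : 0 \le b_i < 1\}$ together with a way of distributing the remaining integer mass among the $d+1$ vertices. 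The number of ways to distribute an integer $s$ among $d+1$ slots is $\binom{s+d}{d}$, a polynomial in $s$ of degree $d$, so summing $\binom{t - (\text{fractional contribution}) + d}{d}$ over the finitely many lattice points in the half-open parallelepiped will exhibit $\ell_\Delta(t)$ as a sum of finitely many polynomials in $t$, hence a polynomial. Checking that the leading coefficient is nonzero (so the degree is exactly $d$) will follow from the fact that the normalized volume contributes the top-degree term.

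Next I would handle the reciprocity bookkeeping that makes the ``$t < 0$'' clause in the definition of $\ell_\P$ consistent with a single polynomial. For the simplex this amounts to the identity $\binom{-s+d}{d} = (-1)^d \binom{s-1}{d}$ together with a matching bijection: interior lattice points of $t\Delta$ correspond to representations with all $a_i > 0$, i.e.\ all fractional parts strictly positive, and one checks that the polynomial obtained from the point-counting sum, when evaluated at $-t$, counts exactly $(-1)^d$ times the interior points of $t\Delta$. This is the step where the asymmetric definition of $\ell_\P$ (using $t\P^\circ$ rather than $-t\P^\circ$) pays off, and getting the fractional-part accounting exactly right — matching ``$0 \le b_i < 1$'' against ``$0 < b_i \le 1$'' under negation — is the most delicate part of the simplex case.

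Finally, to pass from simplices to a general integral polytope $\P$ of dimension $d$, I would triangulate $\P$ into integral simplices using the vertices of $\P$ (no new vertices needed: cone from one vertex, or use a pulling triangulation). Then $\ell_\P(t)$ is recovered from the $\ell$-functions of the simplices and their common faces by inclusion--exclusion: $\#(t\P \cap \Z^n) = \sum_{\text{faces } F} \#(tF^\circ \cap \Z^n)$, where $F$ ranges over the relatively open faces of the triangulation, and each term is (by the simplex case, applied in the affine span of $F$) a polynomial in $t$ whose value at negative integers gives $(-1)^{\dim F}$ times the count of relatively interior points. Re-summing these over the faces and using the fact that an $\lfloor$Euler-characteristic$\rfloor$-type alternating sum collapses — precisely, $\sum_F (-1)^{\dim F} = (-1)^d$ when the union of the $F$'s is $\P$ (contractible), and more refined cancellations handle the boundary — will show that the resulting polynomial $L_\P(t)$ satisfies $L_\P(-t) = (-1)^d \#(t\P^\circ \cap \Z^n)$, matching $\ell_\P$ at negative integers, while $L_\P(t) = \ell_\P(t)$ at positive integers and $L_\P(0) = 1$ by the Euler-characteristic computation. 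The main obstacle I anticipate is precisely this last combinatorial reconciliation: verifying that the face-by-face inclusion--exclusion on both the polytope and its interior assembles into a single polynomial with the correct reciprocity, which is where the ``many manipulations of summations'' warned about in the introduction will occur; organizing the faces of the triangulation by dimension and tracking their contributions carefully (perhaps via a recurrence on $d$) should keep this manageable.
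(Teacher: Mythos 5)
Your proposal is correct in outline, but the simplex case takes a genuinely different route from the paper. You use the classical cone/fundamental-parallelepiped decomposition: lift $\Delta$ to the cone over $(v_i,1)$, write every lattice point uniquely as a point of the half-open parallelepiped plus a nonnegative integer combination of the generators, and obtain $\ell_\Delta(t)=\sum_j \delta_j\binom{t-j+d}{d}$, with reciprocity coming from the involution $b_i\mapsto 1-b_i$ exchanging the half-open box $0\le b_i<1$ with $0<b_i\le 1$ and the identity $\binom{-s+d}{d}=(-1)^d\binom{s-1}{d}$. This is essentially Ehrhart's and Stanley's generating-function proof (item (\ref{rationalitem}) of Lemma~\ref{recurrencelemma}), and it buys an explicit nonnegative formula for the coefficients in the $\binom{t-j+d}{d}$ basis (the $h^*$-vector). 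The paper instead covers $(t+d+1)\P$ by the translates $Q_i=(t+d)\P+v_i$, computes the $k$-fold intersections explicitly as smaller dilates, and deduces via inclusion--exclusion the finite-difference recurrence $\sum_{k=0}^{d+1}(-1)^{d+1-k}\binom{d+1}{k}\ell_\P(t+k)=0$, invoking the equivalence of items (\ref{recurrenceitem}) and (\ref{polynomialitem}) of Lemma~\ref{recurrencelemma}; reciprocity then comes from showing the same recurrence persists for negative $t$ by identifying the uncovered ``hole'' $(t+d+1)\P\setminus Q$ with a dilate of $-\P^\circ$. The paper's route is more geometric and avoids lifting to a cone; yours is shorter on the polynomiality step and gives more refined information. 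The passage to general polytopes is essentially the same in both (triangulation, inclusion--exclusion over faces, M\"obius/Euler-characteristic bookkeeping). One small correction to your last paragraph: for a triangulated $d$-ball the full face sum gives $\sum_F(-1)^{\dim F}=\chi(\P)=1$, not $(-1)^d$; the identity you actually need is the one for \emph{interior} faces, $\sum_{F\not\subseteq\partial\P}(-1)^{\dim F}=\chi(\P)-\chi(\partial\P)=(-1)^d$, which is exactly what the M\"obius function computation $\mu(\F,\hat1)=(-1)^{d-\dim\F+1}$ for interior faces (and $0$ for boundary faces) encodes in the paper's argument.
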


Unfolding this compact statement, we obtain the following two
theorems.

\begin{theorem}[Ehrhart] \label{ehrharttheorem} If $\P \subset \R^n$
  is an integral polytope of dimension $d$, then the function $\#(t\P
  \cap \Z^n)$ agrees with a polynomial $L_\P(t)$ of degree $d$ for all
  nonnegative integers.
\end{theorem}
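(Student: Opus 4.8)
The plan is to reduce the general case to the case of a simplex, since the paper emphasizes the uniqueness of convex-combination representations in a simplex, and then to count lattice points in dilates of a simplex via a counting-of-the-fundamental-domain argument combined with inclusion-exclusion. First I would triangulate the integral polytope $\P$ into integral simplices $\Delta_1, \dots, \Delta_m$, all of whose vertices are vertices of $\P$ (so each $\Delta_i$ is an integral simplex of dimension $\le d$); this is a standard fact, e.g. by pulling triangulation. Then $t\P = \bigcup_i t\Delta_i$, and by inclusion–exclusion on the lattice points, $\#(t\P \cap \Z^n)$ is an alternating sum of the quantities $\#(t\sigma \cap \Z^n)$ over all faces $\sigma$ that arise as intersections $\Delta_{i_1} \cap \cdots \cap \Delta_{i_k}$; each such $\sigma$ is again an integral simplex. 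So it suffices to show: for an integral simplex $\Delta$ of dimension $e$, the function $t \mapsto \#(t\Delta \cap \Z^n)$ agrees with a polynomial of degree $e$ for $t \ge 0$. (One then checks the degree of the alternating sum for $\P$ is exactly $d$, which follows since the top-dimensional simplices contribute the leading term and lower-dimensional faces contribute lower-degree terms that cannot all cancel — the leading coefficient is $\sum_i \vol(\Delta_i)$, up to normalization, which is positive.)

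For a single integral simplex $\Delta$ with vertices $v_0, \dots, v_e$, the key device is the half-open cone over $\Delta$: lift each vertex to $w_i = (v_i, 1) \in \Z^{n+1}$ and form $C = \{ \lambda_0 w_0 + \cdots + \lambda_e w_e \mid \lambda_i \ge 0 \}$, the cone whose slice at height $t$ is $t\Delta$. Every lattice point of $C$ can be written uniquely as $p = z + \sum \mu_i w_i$ with $z$ in the half-open fundamental parallelepiped $\Pi = \{ \sum \lambda_i w_i \mid 0 \le \lambda_i < 1 \}$ and $\mu_i \in \Z_{\ge 0}$; this uses the linear independence of $w_0, \dots, w_e$, exactly the uniqueness property the paper highlights for simplices. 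Sorting the finitely many lattice points of $\Pi$ by their last coordinate (their "height" $h$, which is an integer with $0 \le h \le e$), one gets the generating-function identity
\[
\sum_{t \ge 0} \#(t\Delta \cap \Z^n)\, x^t \;=\; \frac{\sum_{z \in \Pi \cap \Z^{n+1}} x^{\mathrm{ht}(z)}}{(1-x)^{e+1}}.
\]
The numerator is a polynomial in $x$ of degree $\le e$ with nonnegative integer coefficients (and constant term $1$, from $z = 0$), so expanding $\tfrac{1}{(1-x)^{e+1}} = \sum_t \binom{t+e}{e} x^t$ and convolving shows $\#(t\Delta \cap \Z^n)$ is, for $t \ge 0$, a $\Z$-linear combination of the polynomials $\binom{t+e-h}{e}$ for $0 \le h \le e$ — hence a polynomial in $t$ of degree exactly $e$, the leading coefficient being (the normalized) $\vol(\Delta) > 0$.

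I expect the main obstacle to be the bookkeeping in the inclusion–exclusion step: one must make sure that the simplices appearing as multi-intersections of the $\Delta_i$ are themselves integral simplices (true for a triangulation using only the original vertices), that the alternating sum is valid as an identity of integer-point counts (a finite inclusion–exclusion, so no convergence issues), and most delicately that the top-degree terms do not cancel. The cleanest way to handle the degree claim is to note that the signed sum telescopes so that $L_\P(t) = \sum_i L_{\Delta_i^\circ}(t)$ — i.e. the disjoint decomposition of $\P$ into the \emph{relative interiors} of the simplices in the triangulation (of all dimensions), for which the count is exactly additive with no inclusion–exclusion at all; then each relatively-open integral simplex contributes a polynomial (its own Ehrhart-type count, obtained from the half-open parallelepiped with \emph{all} $\lambda_i > 0$), and the degree-$d$ pieces contribute strictly positive leading coefficients that sum to the normalized volume of $\P$, while everything else has degree $< d$. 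This disjoint-decomposition viewpoint is also exactly what will later make Ehrhart–Macdonald reciprocity transparent, so it is the natural route to take here.
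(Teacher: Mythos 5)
Your proposal is correct, but for the heart of the matter --- the simplex case --- it takes a genuinely different route from the paper, and in fact essentially the route the paper deliberately avoids. The paper's reduction to simplices via triangulation and inclusion--exclusion matches yours, but from there the paper proves the recurrence $\ell_\P(t+d+1) = \sum_{k=0}^d (-1)^{d-k}\binom{d+1}{k}\ell_\P(t+k)$ geometrically: it covers $(t+d+1)\P$ by the $d+1$ translates $Q_i = (t+d)\P + v_i$, observes that every $k$-fold intersection of these is an integer translate of $(t+d+1-k)\P$, and applies inclusion--exclusion; polynomiality then follows from the equivalence of items (\ref{recurrenceitem}) and (\ref{polynomialitem}) of Lemma~\ref{recurrencelemma}. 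Your cone-over-the-simplex argument with the half-open fundamental parallelepiped instead establishes the rational generating function $\sum_{t\ge 0}\#(t\Delta\cap\Z^n)x^t = P(x)/(1-x)^{e+1}$, i.e.\ it goes through item (\ref{rationalitem}) of the lemma --- which is precisely Ehrhart's original strategy that the paper notes it is replacing. Both arguments are sound, and the details of yours check out: the lifted vertices $w_i=(v_i,1)$ are linearly independent because the $v_i$ are affinely independent, the parallelepiped decomposition is the unique-representation property of simplices, and your degree argument via the disjoint decomposition of $\P$ into relative interiors of faces (with the top-dimensional open simplices contributing positive leading coefficients summing to the normalized volume) is clean and correct; it is arguably tidier than the paper's lower bound $L_\P(dt)\ge t^d$. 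What each approach buys: yours gives the $h^*$-numerator with nonnegative integer coefficients for free and is the standard textbook proof, while the paper's covering recurrence is chosen because its failure for negative $t$ --- the ``deficiency'' $(t+d+1)\P\setminus Q$, which turns out to be a translate of a dilated open simplex --- is exactly what drives the paper's proof of Ehrhart--Macdonald reciprocity. Your closing remark that the open-simplex decomposition also makes reciprocity transparent is a reasonable alternative path (it is how the paper itself passes from simplices to general polytopes in the reciprocity proof), but it is not the mechanism the paper uses for simplices.
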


The polynomial $L_\P(t)$ is called the {\bf Ehrhart polynomial} of
$\P$. The combinatorial reciprocity theorem associated with it is the
following statement.

\begin{theorem}[Ehrhart--Macdonald reciprocity] \label{reciprocity} If
  $\P \subset \R^n$ is an integral polytope of dimension $d$, then for
  $t \in \Z_{>0}$,
  \[
  (-1)^d L_{\P}(-t) = \#(t\P^\circ \cap \Z^n).
  \]
\end{theorem}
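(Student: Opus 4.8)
The plan is to reduce to the case of a simplex, establish both polynomiality and reciprocity there by an explicit geometric bijection, and then reassemble an arbitrary polytope from a triangulation.

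\medskip
\noindent\textbf{Step 1: the simplex, via the cone over it.}
Let $\Delta = \conv(v_0,\dots,v_d)$ be a $d$-simplex with $v_i\in\Z^n$, and lift to the linearly independent vectors $w_i := (v_i,1)\in\Z^{n+1}$. Let $C = \{\sum \lambda_i w_i : \lambda_i\ge 0\}$ be the cone they generate and $\Pi = \{\sum \mu_i w_i : 0\le \mu_i<1\}$ its half-open fundamental parallelepiped. The geometric bijection is the classical one: every lattice point of $C$ is uniquely $p + \sum n_i w_i$ with $p\in\Pi\cap\Z^{n+1}$ and $n_i\in\Z_{\ge 0}$. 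Intersecting with the hyperplane ``last coordinate $=t$'' identifies $t\Delta\cap\Z^n$ with pairs consisting of a lattice point $p\in\Pi$ of height $k$ (where $0\le k\le d$ since $\sum\mu_i<d+1$) and a nonnegative integer solution of $\sum_{i=0}^d n_i = t-k$. As the number of such solutions equals the polynomial $\binom{t-k+d}{d}$ for $t\ge k$ and that polynomial also vanishes at the intermediate nonnegative integers, one obtains, for all $t\ge 0$,
\[
\#(t\Delta\cap\Z^n) = \sum_{k=0}^d a_k\binom{t-k+d}{d} =: L_\Delta(t), \qquad a_k := \#\{p\in\Pi\cap\Z^{n+1}: \text{height } k\},
\]
a polynomial of degree $d$ with positive leading coefficient $\tfrac1{d!}\sum_k a_k$. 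For the open simplex I would use the complementary parallelepiped $\Pi' = \{\sum\mu_i w_i : 0<\mu_i\le 1\}$, whose lattice points biject with those of $C^\circ$; the translation $p\mapsto (w_0+\dots+w_d)-p$ is a height-reversing bijection $\Pi\cap\Z^{n+1}\to\Pi'\cap\Z^{n+1}$, so the same count gives $\#(t\Delta^\circ\cap\Z^n) = \sum_{k=0}^d a_k\binom{t+k-1}{d}$ for $t>0$. A one-line factorial manipulation shows $\binom{-t+d-k}{d} = (-1)^d\binom{t+k-1}{d}$, hence $(-1)^d L_\Delta(-t) = \#(t\Delta^\circ\cap\Z^n)$, which is Theorem~\ref{reciprocity} for simplices.

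\medskip
\noindent\textbf{Step 2: reassembling a general $\P$.}
Fix a triangulation $\T$ of $\P$ whose vertices are among the vertices of $\P$, so every face of $\T$ is an integral simplex, and recall that $\P$ is the disjoint union of the relative interiors $\sigma^\circ$ over all nonempty faces $\sigma\in\T$. Scaling and counting gives $\#(t\P\cap\Z^n) = \sum_{\sigma\in\T}\#(t\sigma^\circ\cap\Z^n)$; by Step 1 each summand agrees for $t>0$ with the polynomial $(-1)^{\dim\sigma}L_\sigma(-t)$, so $\#(t\P\cap\Z^n)$ agrees for $t\ge 0$ with $L_\P(t) := \sum_{\sigma\in\T}(-1)^{\dim\sigma}L_\sigma(-t)$ (the value at $t=0$ matches since $\sum_{\sigma\in\T}(-1)^{\dim\sigma}$ is the Euler characteristic of the contractible $\P$, namely $1$); this is Theorem~\ref{ehrharttheorem}, and the degree is $d$ because the maximal faces contribute positive leading coefficients with no cancellation. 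Substituting $-t$,
\[
(-1)^d L_\P(-t) = \sum_{\sigma\in\T}(-1)^{d-\dim\sigma}L_\sigma(t) = \sum_{\sigma\in\T}(-1)^{d-\dim\sigma}\,\#(t\sigma\cap\Z^n)\qquad(t>0),
\]
so it remains to prove the identity $\sum_{\sigma\in\T}(-1)^{d-\dim\sigma}\#(t\sigma\cap\Z^n) = \#(t\P^\circ\cap\Z^n)$ for $t>0$. I would do this lattice point by lattice point: a point $x\in\Z^n$ lies in $t\sigma$ exactly for the faces $\sigma$ containing the carrier $\tau$ (the face with $x\in t\tau^\circ$), and $\sum_{\sigma\supseteq\tau}(-1)^{d-\dim\sigma} = (-1)^{d-\dim\tau-1}\widetilde{\chi}(\operatorname{lk}\tau)$; since the link of an interior face of a triangulated ball is a sphere and the link of a boundary face is a ball, this is $1$ when $x\in t\P^\circ$ and $0$ otherwise, which finishes the proof.

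\medskip
\noindent\textbf{Expected main obstacle.}
The bijection and factorial bookkeeping of Step~1 are routine; the delicate point is Step~2, namely accounting for the internal walls of the triangulation, since the open simplices $t\sigma^\circ$ do \emph{not} tile $t\P^\circ$. This is what forces the alternating sum over all faces of $\T$, and its evaluation rests on the standard but nontrivial fact that links of interior versus boundary faces of a triangulated polytope are spheres versus balls — or, to stay elementary, on computing the relevant reduced Euler characteristics directly. An alternative that sidesteps the topology is to set up an induction on $\dim\P$: peel off a vertex star to reduce $\P^\circ$ to a polytope of the same dimension with one fewer vertex and match it against the corresponding recurrence for $L_\P(-t)$, at the cost of a more intricate induction and more summation manipulations.
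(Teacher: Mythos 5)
Your argument is correct, but Step 1 takes a genuinely different route from the paper. For the simplex, the paper never leaves the ambient space: it covers $(t+d+1)\P$ by the $d+1$ translates $Q_i=(t+d)\P+v_i$, computes their $k$-fold intersections explicitly, and extracts from inclusion--exclusion the recurrence $\sum_{k=0}^{d+1}(-1)^{d+1-k}\binom{d+1}{k}\ell_\P(t+k)=0$, which yields polynomiality via the difference-operator half of Lemma~\ref{recurrencelemma}; reciprocity then comes from showing that the same recurrence persists for negative $t$ by identifying the ``hole'' $(t+d+1)\P\setminus Q$ with a translate of $t\P^\circ$ minus a smaller union of open simplices. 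You instead lift to the cone over $\Delta$ in $\R^{n+1}$, tile it by the half-open fundamental parallelepiped, and get closed formulas $\sum_k a_k\binom{t-k+d}{d}$ and $\sum_k a_k\binom{t+k-1}{d}$ whose reciprocity is a one-line binomial identity driven by the height-reversing involution $p\mapsto(w_0+\cdots+w_d)-p$. Your route (essentially Ehrhart's original method as streamlined by Stanley) is shorter and gives the $\delta$-vector $(a_0,\dots,a_d)$ and its nonnegativity for free; the paper's route stays entirely inside $\R^n$ and is deliberately designed to make reciprocity visible as a geometric statement about a covering with a hole. Your Step 2 is essentially the paper's argument run in the opposite direction: the paper starts from $\ell_\P(t)=\sum_{\F}\#(t\F^\circ\cap\Z^n)$ and M\"obius-inverts over the face poset, citing the computation of $\mu(\F,\hat 1)$, which is exactly your reduced-Euler-characteristic-of-links evaluation of $\sum_{\sigma\supseteq\tau}(-1)^{d-\dim\sigma}$; both rest on the same topological input about interior versus boundary faces of a triangulated ball, so that portion buys nothing new, and your appeal to it is as legitimate as the paper's.
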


Compare this with \eqref{pickequation}. Our proof of these theorems
uses the following standard result \cite[Corollary 4.3.1]{stanleyec1}.

\begin{lemma} \label{recurrencelemma} For $f \colon \Z_{\ge 0} \to \C$
  and $d \in \Z_{\ge 0}$, the following are equivalent:
  \begin{enumerate}[(i)]
  \item \label{rationalitem} There exists $P(z) \in \C[z]$ with $\deg
    P \le d$ such that
    \[ \displaystyle \sum_{t \ge 0} f(t) z^t =
    \frac{P(z)}{(1-z)^{d+1}}\,. \]
  \item \label{recurrenceitem} For all $t \ge 0$,
    \[ 
    \sum_{k=0}^{d+1}(-1)^{d+1-k} \binom{d+1}{k} f(t+k) = 0.
    \] 
  \item \label{polynomialitem} There is a polynomial of degree $\le d$
    that agrees with $f(t)$ for all nonnegative integers.
  \end{enumerate}
\end{lemma}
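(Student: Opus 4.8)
The plan is to establish the two equivalences (i)$\Leftrightarrow$(ii) and (ii)$\Leftrightarrow$(iii), from which the statement follows. All of the work is with formal power series and finite differences; nothing here is deep, but the bookkeeping in the first step deserves attention.

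First I would prove (i)$\Leftrightarrow$(ii) by a single coefficient computation. Set $F(z) = \sum_{t\ge 0} f(t) z^t$ and expand $(1-z)^{d+1} = \sum_{j=0}^{d+1} (-1)^j \binom{d+1}{j} z^j$. For $m \ge d+1$ the coefficient of $z^m$ in $(1-z)^{d+1} F(z)$ is $\sum_{j=0}^{d+1} (-1)^j \binom{d+1}{j} f(m-j)$; writing $m = t+d+1$ and reindexing by $k = d+1-j$ turns this into $\sum_{k=0}^{d+1} (-1)^{d+1-k}\binom{d+1}{k} f(t+k)$, which is exactly the left-hand side of the recurrence in (ii). Hence $(1-z)^{d+1} F(z)$ is a polynomial of degree $\le d$ precisely when all of these coefficients vanish, so (i) holds (with $P(z) = (1-z)^{d+1} F(z)$) if and only if (ii) holds.

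Next, for (ii)$\Rightarrow$(iii), having (i) available I would write $F(z) = P(z)/(1-z)^{d+1}$ with $P(z) = \sum_{i=0}^d c_i z^i$, use the expansion $(1-z)^{-(d+1)} = \sum_{t\ge 0}\binom{t+d}{d} z^t$, and extract the coefficient of $z^t$ to obtain $f(t) = \sum_{i=0}^d c_i \binom{t-i+d}{d}$ for all $t \ge 0$, where $\binom{x}{d}$ now denotes the degree-$d$ polynomial $x(x-1)\cdots(x-d+1)/d!$. The right-hand side is a polynomial in $t$ of degree $\le d$, giving (iii). For the reverse implication (iii)$\Rightarrow$(ii), I would note that with the forward difference operator $(\Delta g)(t) := g(t+1) - g(t)$, an easy induction identifies $(\Delta^{d+1} f)(t)$ with the left-hand side of the recurrence in (ii); since $\Delta$ lowers the degree of a nonconstant polynomial by one and annihilates constants, $\Delta^{d+1}$ kills every polynomial of degree $\le d$, and applying this to the polynomial that agrees with $f$ on $\Z_{\ge 0}$ yields (ii).

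I expect the main obstacle to be purely clerical: getting the signs and the shift $m = t+d+1$ to line up correctly when matching the coefficient extraction in step one against the stated recurrence. A secondary point, in (ii)$\Rightarrow$(iii), is that one must check the closed form $f(t) = \sum_{i} c_i\binom{t-i+d}{d}$ is valid for the small values $0 \le t < d$ as well, and not merely for $t \ge d$; this works out because $\binom{x}{d}$ vanishes at $x = 0,1,\dots,d-1$, which absorbs exactly the terms with $i > t$ that the geometric-series expansion would otherwise drop.
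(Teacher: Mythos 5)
Your proof is correct, and it takes a genuinely different route from the paper. The paper proves only the equivalence of (ii) and (iii) (citing Stanley for the rest of the lemma), and does so by induction on $d$: one applies the inductive hypothesis to the first difference $g(t) = f(t+1)-f(t)$, and in the direction (ii)$\Rightarrow$(iii) one must additionally show that partial sums of a polynomial are polynomial, which the paper does via the hockey-stick identity $\sum_{k=0}^{t}\binom{k}{i} = \binom{t+1}{i+1}$. You instead make (i) the pivot: the coefficient extraction from $(1-z)^{d+1}F(z)$ gives (i)$\Leftrightarrow$(ii) in one stroke, and the expansion $f(t) = \sum_i c_i \binom{t-i+d}{d}$ gives (ii)$\Rightarrow$(iii) with an explicit polynomial formula; your observation that $\binom{x}{d}$ vanishes at $x=0,\dots,d-1$ correctly handles the small values of $t$, which is the one place such an argument usually goes wrong. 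Your (iii)$\Rightarrow$(ii) via $\Delta^{d+1}$ annihilating polynomials of degree $\le d$ is the same finite-difference idea as the paper's, just packaged without induction on $d$. The trade-off: your argument is shorter, proves all three equivalences rather than two, and yields the closed form for $f$ in terms of the numerator coefficients $c_i$ (which is the standard route to the $h^*$-vector); the paper's induction is more elementary in that it never leaves the world of sequences and avoids formal power series entirely, which fits its expository aim of isolating exactly the equivalence (ii)$\Leftrightarrow$(iii) that the rest of the paper uses.
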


The original proof of Theorem~\ref{ehrharttheorem} by Ehrhart
\cite{ehrhartpolynomial} uses the equivalence of items
(\ref{rationalitem}) and (\ref{polynomialitem}) from Lemma
\ref{recurrencelemma}, but we shall make use of the equivalence of
items (\ref{recurrenceitem}) and (\ref{polynomialitem}). For another
account of Ehrhart's proof, the book \cite{ccd} is recommended. A
completely different approach using the machinery of toric varieties
can be found in \cite[Chapter 13]{mustata}. We should mention that
though the algebro-geometric proof of Ehrhart's theorem uses much more
machinery than may seem necessary, it does have the nice feature that
it does not appeal to triangulations to reduce to the case that the
polytope is a simplex. Going back to our proof, since the equivalence
of items (\ref{recurrenceitem}) and (\ref{polynomialitem}) is so
crucial to our approach, we will give a proof.

\begin{proof}[Proof of equivalence of items (\ref{recurrenceitem}) and
  (\ref{polynomialitem}) of Lemma \ref{recurrencelemma}]

  The proof is by induction on $d$. If $d=0$, then the equivalence
  says that $f$ is a constant function if and only if $f(t+1) - f(t) =
  0$ for all $t$, which is clear. Now suppose that $d>0$, and assume
  that the equivalence holds for $d-1$. 

  Suppose that (\ref{polynomialitem}) holds. Then $g(t) = f(t+1) -
  f(t)$ is a polynomial of degree $\le d-1$, so by the inductive
  hypothesis,
  \begin{align*}
    0 &= \sum_{k=0}^d (-1)^{d-k} \binom{d}{k} (f(t+1+k) - f(t+k))\\
    &= \sum_{k=1}^{d+1} (-1)^{d-k+1} \binom{d}{k-1} f(t+k) +
    \sum_{k=0}^d (-1)^{d-k+1} \binom{d}{k} f(t+k)\\
    &= f(t+d+1) + (-1)^{d+1} f(t) + \sum_{k=1}^d (-1)^{d-k+1}
    \binom{d+1}{k} f(t+k)\\
    &= \sum_{k=0}^{d+1} (-1)^{d-k+1} \binom{d+1}{k} f(t+k).
  \end{align*}

  Now suppose that (\ref{recurrenceitem}) holds. Running backwards
  through the above calculations, we see that the function $g(t) =
  f(t+1) - f(t)$ satisfies (\ref{recurrenceitem}) for $d-1$ instead of
  $d$, so by the inductive hypothesis, there is a polynomial of degree
  $\le d-1$ that agrees with $g(t)$ for all nonnegative integers. Then
  we can write $f(t+1) = g(t) + f(t)$, and by induction on $t$ this
  becomes
  \[
  f(t+1) = f(0) + \sum_{k=0}^t g(k).
  \]
  So it is enough to check that the sum on the right is a
  polynomial. By breaking $g$ up into monomials, we can reduce to
  showing that $\sum_{k=0}^t k^r$ is a polynomial. This is a
  well-known fact, but here is a short proof. Since $t^r$ is a
  rational polynomial, it is in the rational vector space with basis
  $\binom{t}{i}$ for $i \in \N$. So we can make the further reduction
  of showing that $\sum_{k=0}^t \binom{k}{i}$ is a polynomial for
  fixed $i$. But this is true because of the identity
  \[
  \sum_{k=0}^t \binom{k}{i} = \binom{t+1}{i+1}.
  \]
  To see why this identity holds, note that the right-hand side counts
  $(i+1)$-subsets of $\{1, \dots, t+1\}$, while the left-hand side
  counts the same thing if we interpret each $\binom{k}{i}$ as
  counting the number of $(i+1)$-subsets of $\{1, \dots, k+1\}$ which
  contain $k+1$.
\end{proof}

\section{The Ehrhart polynomial of an integral polytope.}

To get a feel for the geometric idea behind the proof of Theorem
\ref{ehrharttheorem}, we begin by considering the polytope $\P$ whose
vertices are $(0,0)$, $(2,0)$, and $(2,1)$. The large triangle in
Figures~\ref{coveringfig} and \ref{intersectionfig} is $3\P$, and the
shaded subtriangles display the following recurrence relation:
\[
\ell_\P(3) = 3\ell_\P(2) - 3\ell_\P(1) + \ell_\P(0).
\]

\begin{proof}[Proof of Theorem~\ref{ehrharttheorem}] We will show that
  \begin{align} \label{incexcl} \ell_{\P}(t+d+1) = \sum_{k=0}^d
    (-1)^{d-k} \binom{d+1}{k} \ell_{\P}(t+k)
  \end{align}
  for all $t \ge 0$; then Lemma~\ref{recurrencelemma} gives the
  polynomiality of the sequence $\ell_{\P}(t)$. It is sufficient to
  prove \eqref{incexcl} for simplices because any integral polytope
  $\P$ can be triangulated\footnote{Our definition of a triangulation
    of a polytope $\P$ is a finite collection of simplices $\T =
    \{T_i\}$ such that (1) $\bigcup T_i = \P$, (2) if $T'$ is a face
    of some $T_i \in \T$, then $T' \in \T$, and (3) for $T_i, T_j \in
    \T$, the intersection of $T_i$ and $T_j$ is a face of both $T_i$
    and $T_j$. } into simplices $\{T_i\}$ such that each vertex of
  each $T_i$ is a vertex of $\P$ (a proof of this can be found in
  \cite[Appendix B]{ccd}). Inclusion-exclusion then gives
  $\ell_{\P}(t)$ as a sum of the $\ell_{T_i}(t)$ with appropriate
  signs. So without loss of generality, we may assume that $\P$ is a
  simplex.

  Let $\{v_0, \dots, v_d\}$ be the vertices of $\P$ and fix an integer
  $t \ge 0$. For each vertex $v_i$ of $\P$, define $Q_i := (t+d)\P +
  v_i$. See Figure~\ref{coveringfig} for an example where $d=2$,
  $t=0$, and $\P$ is the convex hull of $\{(0,0), (2,0), (2,1)\}$.
  \begin{figure}
    \begin{center}
      \includegraphics[width=.75\textwidth]{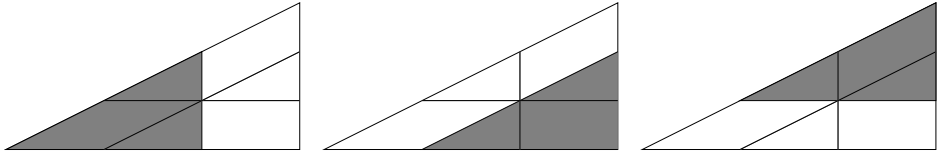}
    \end{center}
    \caption{From left to right: $Q_0, Q_1, Q_2$}
    \label{coveringfig}
  \end{figure}
  We use inclusion-exclusion to compute the number of integer points
  in $Q := \bigcup_i Q_i$. That is, we add the number of integer
  points that are contained in each $Q_i$, subtract those that are
  contained in each intersection of two $Q_i$, etc. By our
  construction of these simplices, we can describe the $k$-fold
  intersections explicitly. For our running example, see
  Figure~\ref{intersectionfig}.
  \begin{figure}
    \begin{center}
      \includegraphics[width=.75\textwidth]{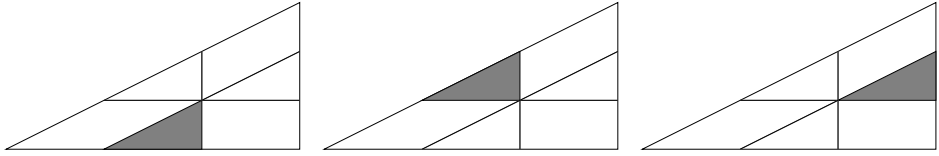}
    \end{center}
    \caption{From left to right: $Q_0 \cap Q_1, Q_0 \cap Q_2, Q_1 \cap
      Q_2$.}
    \label{intersectionfig}
  \end{figure}
  The first observation is that
  \begin{align*}
    Q_j &= (t+d)\P + v_j\\
    &= \left\{ v_j + \sum_{i=0}^d (t+d)c_iv_i \ \middle|\ c_i \ge 0,
      \sum_{i=0}^d c_i = 1 \right\}\\
    &= \left\{ \sum_{i=0}^d a_iv_i \ \middle|\ a_j \ge 1, a_i \ge 0
      \text{ if } i \ne j, \text{ and } \sum_{i=0}^d a_i = t+d+1
    \right\},
  \end{align*}
  so for any $I \subseteq D := \{0,\dots,d\}$,
  \begin{align*}
    \bigcap_{i \in I} Q_i &= \left\{ \sum_{i=0}^d a_iv_i \ \middle|\
      a_i \ge 1 \text{ if } i \in I, a_i \ge 0 \text{ if } i \notin I,
      \text{ and } \sum_{i=0}^d a_i = t+d+1 \right\}\\
    &= (t + d + 1 - \#I)\P + \sum_{i \in I} v_i.
  \end{align*}
  For each $k=1,\dots,d+1$, there are $\binom{d+1}{d+1-k}$ $k$-fold
  intersections, and each contains $\ell_\P(t+d+1-k)$ integer points
  because the sets differ from one another by an integer translate. So
  inclusion-exclusion gives
  \begin{align*}
    \#(Q \cap \Z^n) &= \sum_{k=1}^{d+1} (-1)^{k+1} \sum_{\substack{I
        \subseteq D\\ \#I = k}} \#\left(\bigcap_{i \in I} Q_i \cap
      \Z^n\right)\\
    &= \sum_{k=1}^{d+1} (-1)^{k+1} \binom{d+1}{d+1-k} \ell_{\P}(t+d+1-k)\\
    &= \sum_{k=0}^d (-1)^{d-k} \binom{d+1}{k} \ell_{\P}(t+k).
  \end{align*}
  Note that if $t = k = 0$, then our definition $\ell_\P(0) = 1$
  coincides with the fact that the intersection of all the $Q_j$ is a
  single integer point. The right-hand side of this equation coincides
  with the right-hand side of \eqref{incexcl}. To finish, we show that
  $Q = (t+d+1)\P$. It is clear that $Q \subseteq (t+d+1)\P$. To prove
  the other inclusion, first note that
  \[
  (t+d+1)\P = \left\{ \sum_{i=0}^d a_iv_i \ \middle|\ a_i \ge 0,\
    \sum_{i=0}^d a_i = t+d+1 \right\}.
  \]
  Since $t \ge 0$, it follows that for any point $P = a_0v_0 + \cdots
  + a_dv_d \in (t+d+1)\P$, there must exist some $j$ such that $a_j
  \ge 1$. (This is one of the reasons why it is important that we are
  assuming that $\P$ is a simplex.) Then $P \in Q_j$, so $(t+d+1)\P
  \subseteq Q$, and we conclude that there exists a polynomial
  $L_\P(t)$ such that $L_\P(t) = \#(t\P \cap \Z^n)$ for $t \in \Z_{\ge
    0}$.

  Finally, we must show that the degree of the polynomial $L_\P(t)$ is
  $d$. The above work shows that $L_\P(t)$ is a polynomial of degree
  at most $d$. By translating $\P$ if necessary, we may assume that
  one of its vertices is the origin. Since $\P$ is $d$-dimensional,
  there are vertices $v_1, \dots, v_d$ that are linearly independent
  when considered as vectors. For positive integers $k_1, \dots, k_d
  \le t$, the point $k_1v_1 + \cdots + k_dv_d$ lives in $dt\P \cap
  \Z^n$, and these points are all distinct for different choices of
  $k_i$ by linear independence of the $v_i$. Hence $L_\P(dt) \ge t^d$,
  which shows that $L_\P(t)$ has degree at least $d$.
\end{proof}

\section{Ehrhart--Macdonald reciprocity.}

As in the case of the proof of Theorem~\ref{ehrharttheorem}, the idea
behind the proof of Theorem~\ref{reciprocity} can be seen in Figure
\ref{reciprocityfig}: again we are considering the polytope $\P$ whose
vertices are $(0,0)$, $(2,0)$, and $(2,1)$. The figure depicts $2\P$
and illustrates the recurrence relation
\[
\ell_\P(2) = 3\ell_\P(1) - 3\ell_\P(0) + \ell_\P(-1).
\]

\begin{proof}[Proof of Theorem~\ref{reciprocity}] We first handle the
  case when $\P$ is a simplex. Going back to the proof of
  Lemma~\ref{recurrencelemma}, it is clear that if $f \colon \Z_{\ge
    N} \to \C$ for some integer $N$ then the statement that item
  (\ref{recurrenceitem}) of Lemma~\ref{recurrencelemma} holds for all
  $t \ge N$ is equivalent to the statement that there exists a
  polynomial of degree $\le d$ that agrees with $f(t)$ for all $t \ge
  N$. So to prove that there is a polynomial that agrees with
  $\ell_\P(t)$ for {\it all} integers, it will be enough to show that
  \eqref{incexcl} holds for all integers $t$.

  The content of Theorem~\ref{ehrharttheorem} is the case $t \ge
  0$. For $t \le -d-1$, the proof is similar to the proof for
  Theorem~\ref{ehrharttheorem} because every occurrence of $\P$ can be
  replaced by $-\P^\circ$, and the statements are valid after
  replacing weak inequalities (inside of the set descriptions of the
  $Q_i$'s and their intersections) with strict inequalities. So we may
  assume that $0 > t > -d-1$. As before, define $Q_i := (t+d)\P + v_i$
  and $Q := \bigcup_i Q_i$. Then the equality
  \begin{align} \label{qpoints}
  \#(Q \cap \Z^n) = \sum_{k=1}^{t+d+1} (-1)^{k+1} \binom{d+1}{d+1-k}
  \ell_{\P}(t+d+1-k)
  \end{align}
  holds by the same reasoning as in the proof of
  Theorem~\ref{ehrharttheorem}. However, we cannot say that $(t+d+1)\P
  = Q$. Indeed, we can describe this deficiency explicitly:
  \[
  (t+d+1)\P \setminus Q = \left\{ \sum_{i=0}^d a_iv_i \ \middle|\ 0
    \le a_i < 1,\ \sum_{i=0}^d a_i = t+d+1 \right\}.
  \]
  See Figure~\ref{reciprocityfig} for an example in which $d=2$,
  $t=-1$, and $\P$ is the convex hull of $\{(0,0), (2,0), (2,1)\}$. In
  this example, note that the hole is precisely $-\P^\circ + (4,2)$.
  \begin{figure}
    \begin{center}
      \includegraphics[width=.25\textwidth]{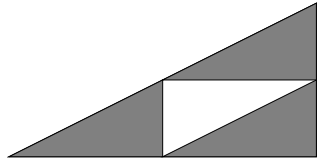}
    \end{center}
    \caption{The deficiency $(t+d+1)\P \setminus Q$.}
    \label{reciprocityfig}
  \end{figure}
  Now define 
  \[
  \P' := t\P^\circ + \sum_i v_i.
  \]
  First note that
  \[
  t\P^\circ = \left\{ \sum_{i=0}^d a_iv_i \ \middle|\ a_i < 0,\
    \sum_{i=0}^d a_i = t \right\},
  \]
  which implies
  \[
  \P' = t\P^\circ + \sum_{i=0}^d v_i = \left\{ \sum_{i=0}^d a_iv_i \
    \middle|\ a_i < 1,\ \sum_{i=0}^d a_i = t+d+1 \right\}.
  \]
  If $t = -1$, then $\P' = (t+d+1)\P \setminus Q$ because each
  coefficient $a_i$ needs to be nonnegative if they are to sum to
  $d$. Otherwise, we can try to cover $\P'$ by simplices of the form
  \[
  Q'_j := (t+1)\P^\circ + \sum_{\substack{i=0\\i \ne j}}^d v_i
  \]
  as in Theorem~\ref{ehrharttheorem}. Define $Q' := \bigcup_i Q'_i$
  for $t < -1$ and $Q' = \varnothing$ for $t=-1$. We shall show that
  $\P' \setminus Q' = (t+d+1)\P \setminus Q$. The case $t=-1$ was
  discussed above, so assume $t<-1$. Then
  \begin{align*}
    Q'_j &= (t+1)\P^\circ + \sum_{\substack{i=0\\i \ne j}}^d v_i\\
    &= \left\{ \sum_{i=0}^d a_iv_i \ \middle|\ a_j < 0, a_i < 1 \text{
        if } i \ne j, \text{ and } \sum_{i=0}^d a_i = t+d+1 \right\},
  \end{align*}
  so
  \begin{align*}
    \P' \setminus Q' = \left\{ \sum_{i=0}^d a_iv_i \ \middle|\ 0 \le
      a_i < 1,\ \sum_{i=0}^d a_i = t+d+1 \right\} = (t+d+1)\P
    \setminus Q.
  \end{align*}
  Inclusion-exclusion once again gives (remember what $\ell_\P(t)$
  means when $t$ is negative!)
  \begin{align*}
    \#(Q' \cap \Z^n) &= \sum_{k=1}^{-(t+1)} (-1)^{k+1}
    \binom{d+1}{d+1-k} \#((t+k)\P^\circ \cap \Z^n) \\
    &= \sum_{k=1}^{-(t+1)} (-1)^{k+1} \binom{d+1}{d+1-k}
    (-1)^d\ell_\P(t+k).
  \end{align*}
  This holds even for $t=-1$ because the sum on the right-hand side is
  empty in this case. This implies that
  \begin{align} \label{p'minusq'points} 
    \#((\P' \setminus Q') \cap
    \Z^n) &= \#(\P' \cap \Z^n) - \#(Q' \cap \Z^n)\notag \\
    &= \sum_{k=0}^{-(t+1)} (-1)^{k+d} \binom{d+1}{d+1-k} \ell_\P(t+k).
  \end{align}

  Finally, since we know that $(t+d+1)\P \setminus Q = \P' \setminus
  Q'$, we can write $(t+d+1)\P$ as the disjoint union of $Q$ and $\P'
  \setminus Q'$. Therefore, combining \eqref{qpoints} and
  \eqref{p'minusq'points},
  \begin{align*}
    \ell_\P(t+d+1) &= \#((t+d+1)\P \cap \Z^n)\\
    &= \#(Q \cap \Z^n) + \#((\P' \setminus Q') \cap \Z^n)\\
    &= \sum_{k=1}^{t+d+1} (-1)^{k+1} \binom{d+1}{d+1-k}
    \ell_\P(t+d+1-k) + \sum_{k=0}^{-(t+1)} (-1)^{k+d}
    \binom{d+1}{d+1-k} \ell_\P(t+k)\\
    &= \sum_{k=-t}^d (-1)^{d-k} \binom{d+1}{k} \ell_\P(t+k) +
    \sum_{k=0}^{-(t+1)} (-1)^{k+d} \binom{d+1}{k} \ell_\P(t+k)\\
    &= \sum_{k=0}^d (-1)^{d-k} \binom{d+1}{k} \ell_\P(t+k),
  \end{align*}
  which finishes the proof for simplices.

  For the general case, let $\P$ be an integral polytope with more
  than $d+1$ vertices. Triangulate $\P$ using only integral vertices;
  call this triangulation $\T$. There is a natural poset structure on
  $\T$, namely $T_i \le T_j$ if $T_i$ is a face of $T_j$. Let $L(\P)$
  denote this poset with an additional element $\hat{1}$ such that
  $\hat{1} \ge T_i$ for all $T_i \in \T$. We finish the proof for $\P$
  via the M\"obius inversion formula \cite[Proposition
  3.7.1]{stanleyec1} on $L(\P)$. Fix some $t \in \Z_{>0}$. Define $f
  \colon L(\P) \to \Z_{\ge 0}$ by $f(\F) = L_\F(t)$ where $\F$ is a
  face of $\T$ and $f(\hat{1}) = L_\P(t)$. Also, define $g \colon
  L(\P) \to \Z_{\ge 0}$ by $g(\F) = (-1)^{\dim \F}L_\F(-t)$ where $\F$
  is a face of $\T$ and $g(\hat{1}) = 0$. Because every point of $\P$
  lies in the relative interior of a unique face of $\T$, we know that
  \[
  f(\hat{1}) = \ell_\P(t) = \sum_{\F < \hat{1}} \#(t\F^\circ \cap
  \Z^n) = \sum_{\F \le \hat{1}} g(\F),
  \]
  and by the M\"obius inversion formula, this is equivalent to
  \begin{align} \label{mobius}
  0 = g(\hat{1}) = \sum_{\F \le \hat{1}} \mu(\F, \hat{1}) f(\F),
  \end{align}
  where $\mu$ is the M\"obius function on $L(\P)$. Appealing to
  \cite[Proposition 3.8.9]{stanleyec1},
  \[
  \mu(\F,\hat{1}) = \begin{cases} 0 & \text{if } \F \subseteq \partial
    \P \text{ or } \F = \varnothing,\\
    1 & \text{if } \F = \hat{1},\\
    (-1)^{d-\dim \F+1} & \text{otherwise.}
\end{cases}
  \]
  Now \eqref{mobius} becomes
  \[
  0 = f(\hat{1}) + \sum_{\F \in \T^\circ} (-1)^{d-\dim \F +1} f(\F),
  \]
  where $\T^\circ$ is the set of faces of $\T$ that do not lie on the
  boundary of $\P$. In a nicer form, this is
  \[
  L_\P(t) = (-1)^d \sum_{\F \in \T^\circ} (-1)^{\dim \F} L_\F(t).
  \]
  The functions involved are polynomials, so since they agree at all
  positive integers, they are equal as functions. The last step is to
  evaluate at $-t$:
  \begin{align*}
    L_\P(-t) &= (-1)^d \sum_{\F \in \T^\circ} (-1)^{\dim \F} L_\F(-t)\\
    &= (-1)^d \sum_{\F \in \T^\circ} \#(t\F^\circ \cap \Z^n)\\
    &= (-1)^d \#(t\P^\circ \cap \Z^n). \qedhere
  \end{align*}
\end{proof}

\section{The Ehrhart quasi-polynomial of a rational
  polytope.} \label{quasipolysection} 

Now that we have obtained our objective, we generalize to rational
polytopes. To do so, we need some more definitions. The {\bf
  denominator} of a rational polytope $\P$ is the smallest positive
integer $D$ such that $D\P$ is an integral polytope.

A {\bf quasi-polynomial} $p$ with period $s$ is a piecewise defined
function
\[
p(t) = p_i(t)\quad \text{if }t \equiv i \pmod s,
\]
where the $p_i$ are polynomials. The {\bf degree} of $p$ is the
largest degree of the $p_i$. Equivalently, a quasi-polynomial is a
polynomial whose coefficients are periodic functions with finite
period.

\begin{corollary}[Ehrhart--Macdonald] Let $\P \subset \R^n$ be a
  rational polytope of dimension $d$ with denominator $s$. Then
  $L_{\P}(t)$ is a quasi-polynomial of degree $d$ with period dividing
  $s$, and
  \[
  (-1)^d L_{\P}(-t) = \#(t\P^\circ \cap \Z^n)
  \]
  for all $t \in \Z_{>0}$.
\end{corollary}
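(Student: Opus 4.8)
The plan is to deduce the corollary from the integral case by slicing into residue classes modulo $s$, handling non-simplicial polytopes exactly as in the proofs of Theorems~\ref{ehrharttheorem} and \ref{reciprocity}. Triangulate $\P$ by a family $\T$ of simplices whose vertices are all vertices of $\P$; since $s\P$ is integral, every such simplex $T$ satisfies that $sT$ is integral, i.e.\ has denominator dividing $s$. A finite $\Z$-linear combination of quasi-polynomials of periods dividing $s$ is again one, and inclusion--exclusion over $\T$ writes $\#(t\P\cap\Z^n)$ (for $t>0$) as such a combination of the $\#(tT\cap\Z^n)$, $T\in\T$; so once the corollary is known for rational simplices, the same face-poset M\"obius inversion argument used at the end of the proof of Theorem~\ref{reciprocity} delivers it for $\P$ (that argument used only that each $L_\F$ agrees with a quasi-polynomial at every positive integer and that two quasi-polynomials of period dividing $s$ agreeing on $\Z_{>0}$ are equal). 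From now on $\P$ is a rational simplex with vertices $v_0,\dots,v_d$, so $sv_j\in\Z^n$ for all $j$.

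For $i\in\{0,1,\dots,s-1\}$ put $g_i(m):=\ell_\P(sm+i)$ for $m\in\Z$; as $i$ varies these record $\ell_\P$ on all of $\Z$, so, the map $m\mapsto sm+i$ being affine, it suffices to show each $g_i$ agrees on all of $\Z$ with a polynomial in $m$ of degree $\le d$. Then $L_\P(t):=g_i\bigl((t-i)/s\bigr)$ for $t\equiv i\pmod s$ is a quasi-polynomial of period dividing $s$ agreeing with $\ell_\P$ everywhere, and the reciprocity identity drops out by unwinding the definition of $\ell_\P$ at negative arguments exactly as Theorem~\ref{reciprocity} does (using $\#((-c)\P^\circ\cap\Z^n)=\#(c\P^\circ\cap\Z^n)$, cf.\ the footnote on the definition of $\ell_\P$). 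For $i=0$ this is immediate, since $g_0(m)=\ell_\P(sm)=\ell_{s\P}(m)$ agrees with $L_{s\P}$ on all of $\Z$ by the integral case applied to the $d$-dimensional integral polytope $s\P$; in particular $g_0$ has degree exactly $d$. For $i\ne 0$, by the two-sided form of Lemma~\ref{recurrencelemma} (the equivalence of (ii) and (iii) holds for functions on $\Z_{\ge N}$, and a polynomial of degree $\le d$ valid on $\Z_{\ge N}$ for every $N$ is valid on $\Z$) it is enough to prove, for each fixed $i$ and every $m\in\Z$,
\begin{equation} \label{eq:ratrec}
  \ell_\P\bigl(s(m+d+1)+i\bigr) \;=\; \sum_{k=0}^{d}(-1)^{d-k}\binom{d+1}{k}\,\ell_\P\bigl(s(m+k)+i\bigr).
\end{equation}

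I would prove \eqref{eq:ratrec} by rerunning the geometry of the proofs of Theorems~\ref{ehrharttheorem} and \ref{reciprocity} with the \emph{integer} translation vectors $sv_j$ in place of $v_j$ and the dilation parameter $s(m+d)+i$ in place of $t+d$. Setting $Q_j:=(s(m+d)+i)\P+sv_j$ and $Q:=\bigcup_j Q_j$, one finds in barycentric coordinates that $Q_j=\{\sum a_\ell v_\ell : a_j\ge s,\ a_\ell\ge 0\ (\ell\ne j),\ \sum_\ell a_\ell=s(m+d+1)+i\}$, so each $k$-fold intersection $\bigcap_{j\in I}Q_j$ is the integer translate $(s(m+d+1-\#I)+i)\P+s\sum_{j\in I}v_j$ of $(s(m+d+1-\#I)+i)\P$ and hence contains $\ell_\P(s(m+d+1-\#I)+i)$ lattice points; inclusion--exclusion yields the right-hand side of \eqref{eq:ratrec} as $\#(Q\cap\Z^n)$. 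It remains to compare $Q$ with $(s(m+d+1)+i)\P$ in three ranges of $m$. If $m\ge 0$ then $s(m+d+1)+i\ge s(d+1)$, so by pigeonhole every point of $(s(m+d+1)+i)\P$ has some barycentric coordinate $\ge s$, whence $Q=(s(m+d+1)+i)\P$ and \eqref{eq:ratrec} follows as in Theorem~\ref{ehrharttheorem}. If $m\le -d-2$ then all of $s(m+k)+i$, $0\le k\le d+1$, are negative, and replacing $\P$ by $-\P^\circ$ and all weak inequalities by strict ones turns the identical computation into the corresponding statement about interior lattice points, as in the $t\le -d-1$ case of Theorem~\ref{reciprocity}. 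For the finitely many residual values $m\in\{-d-1,\dots,-1\}$ the parameters $s(m+k)+i$ change sign as $k$ runs through $0,\dots,d+1$, and here one uses the deficiency argument of Theorem~\ref{reciprocity} in scaled form: $(s(m+d+1)+i)\P\setminus Q=\{\sum a_\ell v_\ell : 0\le a_\ell<s,\ \sum_\ell a_\ell=s(m+d+1)+i\}$ coincides with $\P'\setminus Q'$, where $\P':=(sm+i)\P^\circ+s\sum_\ell v_\ell$ (an integer translate of $(sm+i)\P^\circ$, noting $sm+i<0$) and $Q':=\bigcup_j Q'_j$ with $Q'_j:=(s(m+1)+i)\P^\circ+s\sum_{\ell\ne j}v_\ell$ (empty when $m=-1$); computing $\#(Q'\cap\Z^n)$ by inclusion--exclusion, then $\#((\P'\setminus Q')\cap\Z^n)=\#(\P'\cap\Z^n)-\#(Q'\cap\Z^n)$, and finally writing $(s(m+d+1)+i)\P$ as the disjoint union of $Q$ and $\P'\setminus Q'$, gives \eqref{eq:ratrec} after the same telescoping of binomial sums as in Theorem~\ref{reciprocity}. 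This middle range is where the index-chasing is heaviest and is the main obstacle, but it is entirely parallel to the integral case, with every occurrence of $\P$ rescaled so that the translations are by integer vectors.

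Finally, $L_\P$ has degree exactly $d$: its residue-$0$ piece is $L_{s\P}$, which has degree $d$ by Theorem~\ref{ehrharttheorem}; for a general rational polytope one may instead observe directly that $\ell_\P(dst)\ge t^d$ (choose a vertex at the origin and linearly independent vertices $v_1,\dots,v_d$, and consider the points $\sum_j k_j(sv_j)$ with $1\le k_j\le t$), which, together with the degree bound from the recurrence, pins the degree at $d$. (One could alternatively sidestep the bookkeeping in \eqref{eq:ratrec} by homogenizing: the lattice points of the simplicial cone over $\P\times\{1\}$ generated by $(sv_0,s),\dots,(sv_d,s)$ have generating function $\sigma(z)/(1-z^s)^{d+1}$ graded by last coordinate, and extracting the residue-$i$ subseries and applying the equivalence of (i) and (iii) in Lemma~\ref{recurrencelemma} gives the polynomiality of $g_i$; but the recurrence argument stays within the framework of the rest of the paper.)
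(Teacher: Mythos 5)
Your proof is correct and takes essentially the same route as the paper: reduce to simplices, rerun the covering/deficiency argument of Theorems~\ref{ehrharttheorem} and \ref{reciprocity} with the integral translation vectors $sv_j$ so that each residue class $t \equiv i \pmod{s}$ satisfies the recurrence of Lemma~\ref{recurrencelemma}, and pass to general polytopes by triangulation and M\"obius inversion. The paper merely sketches this in a few sentences while you have written out the details (including the middle range of $m$ and the degree claim), but the underlying argument is identical.
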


\begin{proof} Again assume $\P$ is a simplex. The only place that
  integrality was required in the proof of
  Theorem~\ref{ehrharttheorem} is in describing the $k$-fold
  intersections of the $Q_i$. That is, we translated certain sets by
  integral points to get the correct set-theoretic arguments. We can
  do the same thing now, except that now one translates by $sv_i$
  where $v_i$ is a vertex to guarantee preservation of lattice
  points. Thus, for each $0 \le j < s$, the sequence
  $(L_{\P}(ts+j))_{t \in \Z}$ satisfies the condition for
  polynomiality. The jump from simplices to polytopes is the same as
  before.
\end{proof}

\section{Concluding remarks.}

Recalling the example in the introduction on Pick's theorem, there
were interpretations for the coefficients of $L_\P(t)$ when $d=2$. A
more careful estimate of $L_\P(t)$ in the proof of
Theorem~\ref{ehrharttheorem} would show that for general $d$,
$L_\P(t)$ is asymptotic to $t^d\vol(\P)$, where $\vol(\P)$ denotes the
{\bf relative volume} of $\P$, which is the volume of $\P$ relative to
the lattice of its affine span. Thus, the leading coefficient of
$L_\P(t)$ is $\vol(\P)$. The fact that the constant coefficient is 1
follows from the fact that the Euler characteristic of a polytope is
1, and that Euler characteristic is additive with respect to
inclusion-exclusion. To understand the second leading coefficient
$c_{d-1}$ of $L_\P(t)$, we can use Ehrhart--Macdonald reciprocity to
conclude that
\[
\#(\partial \P \cap \Z^n) = L_\P(1) - (-1)^dL_\P(-1),
\]
and the leading coefficient of the right-hand side is $2c_{d-1}$. This
means that $2c_{d-1}$ is the sum of the relative volumes of the facets
of $\P$. With just the results in this paper, this is where we must
stop. Even worse, the coefficients of Ehrhart polynomials may be
negative in some cases (see Example \ref{reeveexample}), so it is not
even clear what to guess the other coefficients might be telling us.

If we allow ourselves to pass to the world of algebraic geometry, then
the coefficients of the Ehrhart polynomial can be expressed via
intersections of Todd classes on the associated toric variety of
$\P$. For more details, the reader is referred to \cite[Section
5.3]{fulton}. In general, however, these intersection numbers are
quite difficult to compute. But with some hard work, one can
understand the linear coefficient for $d = 3$ in terms of Dedekind
sums; this is done in \cite{pommersheim}. This work was generalized in
\cite[Corollary 1]{diaz}, which allows one to obtain the coefficients
of the other terms via Fourier analysis.

~

In general, it is difficult to determine the minimum period of
$L_\P(t)$. Indeed, there even exist examples of nonintegral polytopes
whose Ehrhart quasi-polynomial has period 1. The article
\cite{minperiod} constructs examples for all dimensions $\ge 2$ and
for arbitrary denominator. For more information, the article
\cite{maximalperiod} constructs simplices whose Ehrhart
quasi-polynomial has coefficient functions with prescribed minimum
periods, and the article \cite{periodcollapse} offers some conjectures
for why the minimum period of $L_\P(t)$ is sometimes strictly smaller
than the denominator of $\P$.

~

Consider the following generalization of counting integer points in
$\P$. Instead of counting each point as 1, we weight the points by
their solid angles. Given a polytope $\P \subset \R^n$ and a point $x
\in \R^n$, define the {\bf solid angle} at $x$ with respect to $\P$ to
be
\[
\omega_\P(x) := \lim_{r \to 0} \frac{\vol (B_r(x) \cap \P)}{\vol
  B_r(x)},
\]
where $B_r(x)$ denotes the ball of radius $r$ centered at $x$, and
$\vol$ denotes the usual Euclidean volume in $\R^n$. We should assume
$\P$ is $n$-dimensional, otherwise this limit is always 0, which is
quite boring. This ratio is eventually constant for sufficiently small
$r$, so $\omega_\P(x)$ is well-defined, and we can instead ask about
the solid angle enumerator
\[
a_\P(t) := \sum_{x \in \Z^n} \omega_{t\P}(x).
\]
Note that the sum on the right is actually finite because for $x
\notin t\P$, $\omega_{t\P}(x) = 0$. Going through the proof of
Theorem~\ref{ehrharttheorem}, it is immediate that it generalizes to
the sequence for $\{a_\P(t)\}_{t \in \Z_{>0}}$, so there is a
polynomial $A_\P(t)$ that agrees with $a_\P(t)$ for all $t \in
\Z_{>0}$. We call $A_\P(t)$ the {\bf solid angle polynomial} of
$\P$. By the way, the right way to extend this sequence to $\Z_{\le
  0}$ can be seen from a careful analysis of
Figure~\ref{reciprocityfig}: define
\[
a_\P(-t) := (-1)^n \sum_{x \in \Z^n} \omega_{-t\P}(x)
\]
for $t \in \Z_{>0}$ and $a_\P(0) := 0$. We do not take $-t\P^\circ$
because if two simplices $\Delta_1$ and $\Delta_2$ meet in a facet of
both, and we pick $x \in \Delta_1 \cap \Delta_2$, then
\[
\omega_{\Delta_1 \cup \Delta_2}(x) = \omega_{\Delta_1}(x) +
\omega_{\Delta_2}(x).
\]
In other words, inclusion-exclusion is easy for solid angles because
there are no overlaps! Reciprocity for solid angles tells us simply
that $A_\P(t)$ is either an even or odd function depending on the
parity of $n$. Of course, all of the above discussion can be extended
to rational polytopes by replacing polynomials with
quasi-polynomials. The theory of solid angles of polytopes is still
poorly understood, and the reader is referred to \cite[Chapter
11]{ccd} for some open problems. The recent paper \cite{desario}
extends the theory of solid angles in rational polytopes and integral
dilates to solid angles in arbitrary real polytopes and real dilates
using techniques from harmonic analysis.

\paragraph{Acknowledgements.} The author thanks Aaron Dall for helpful
discussions. The author also thanks Allen Knutson, Richard Stanley,
and Robin Chapman for pointing out corrections and improvements to a
previous draft. Finally the author thanks Matthias Beck for help with
organizational issues of this paper and two anonymous referees for
helpful suggestions.


\bigskip

\noindent Steven V Sam\\
Department of Mathematics\\
Massachusetts Institute of Technology\\
Cambridge, MA 02139\\
ssam@mit.edu\\
\url{http://www.mit.edu/~ssam}

\end{document}